\documentclass[11pt]{article}

\usepackage[margin=1in]{geometry}
\usepackage{amsmath,amssymb,amsthm,mathtools,mathrsfs,bm}
\usepackage[T1]{fontenc}
\usepackage[utf8]{inputenc}
\usepackage{lmodern}
\usepackage{microtype}
\usepackage{booktabs}
\usepackage{array}
\usepackage{enumitem}
\usepackage{multirow}
\usepackage{hyperref}
\hypersetup{
  colorlinks=true,
  linkcolor=blue,
  citecolor=blue,
  urlcolor=blue
}

\newtheorem{theorem}{Theorem}[section]
\newtheorem{lemma}[theorem]{Lemma}
\newtheorem{proposition}[theorem]{Proposition}
\newtheorem{corollary}[theorem]{Corollary}
\theoremstyle{definition}
\newtheorem{definition}[theorem]{Definition}
\newtheorem{remark}[theorem]{Remark}

\newcommand{\R}{\mathbb R}
\newcommand{\Z}{\mathbb Z}

\newcommand{\Oh}{O_h}
\newcommand{\norm}[1]{\left\lVert #1 \right\rVert}
\newcommand{\abs}[1]{\left\lvert #1 \right\rvert}

\newcommand{\Tthree}{\mathbb T^3}

\title{%
Orbit-Level Transfer Matrix for the 3D Fourier--Galerkin Navier--Stokes System on the Periodic Torus:\
Explicit Orbit--Triad Incidence Bounds and Deterministic Row-Sum Estimates}

\author{Oleg Kiriukhin\\
City University of Hong Kong\\
\texttt{okiriukh@cityu.edu.hk}}

\date{March 2026}

\begin{document}
\maketitle

\begin{abstract}
I study the cubic Fourier--Galerkin truncation of the three-dimensional (3D) incompressible Navier--Stokes equations on the periodic torus after reduction by the full octahedral symmetry group $O_h$. The nonlinear interaction is encoded by a state-dependent orbit-level transfer matrix $M_N(u)$, and the main discrete problem is to estimate orbit--triad incidences in shell slices of translated cubes. Using a face-normalized decomposition, I reduce the local counting problem to the classical two-squares representation function and obtain an incidence bound of order $N^{4+\varepsilon}$ by the shell-counting argument developed in this manuscript. I also derive the exact orbit-level enstrophy identity, the algebraic decomposition $M_N(u)=A_N(u)+V_N(u)$, and deterministic Sobolev row-sum bounds for the raw matrix $M_N(u)$ in the stated range of exponents. These results give an orbit-level description of nonlinear transfer in the truncated system.
\end{abstract}

\medskip
\noindent\textbf{Keywords:} 3D Navier--Stokes equations, Fourier--Galerkin truncation, periodic torus, octahedral symmetry group, orbit-level transfer matrix, orbit--triad incidence bounds, enstrophy identity, deterministic row-sum bounds.
\section{Introduction}

The three-dimensional incompressible Navier--Stokes equations on the torus admit a natural Fourier representation in which truncation, symmetry reduction, and triadic interaction structure can be analyzed simultaneously. This paper works with the cubic truncation
\begin{equation*}
\Lambda_N:=\{k\in\Z^3\setminus\{0\}:\abs{k}_\infty\le N\},
\end{equation*}
and then passes to the quotient by the full octahedral group $\Oh$ of signed coordinate permutations.

This symmetry reduction organizes the truncated dynamics by $\Oh$-orbits and leads to a state-dependent orbit-level transfer matrix $M_N(u)$. Its algebraic decomposition
\begin{equation*}
M_N(u)=A_N(u)+V_N(u),
\qquad
A_N(u)^T=-A_N(u),
\qquad
V_N(u)^T=V_N(u)
\end{equation*}
separates antisymmetric redistribution from the symmetric part relevant to the associated quadratic form on orbit coordinates. Throughout the paper, all claims are kept at the level justified by explicit counting arguments and by the deterministic Sobolev estimates proved here, in particular, the row-sum bound in Section~\ref{sec:sobolev} is established only for $3/2<s<3$, which is the range supported by the discrete-to-continuum comparison used there. For a related orbit-level stretching analysis emphasizing sharp incidence, spectral decay, and a continuation criterion, see \cite{Kiriukhin2026OrbitLevelStretching}.

The main discrete object is the orbit--triad incidence count. For target and source orbits $\alpha,\beta\in\mathcal O_N$, let $\Gamma_{\alpha\beta}$ denote the number of ordered triads
\begin{equation*}
(k,p,q),
\qquad
k\in\alpha,
\quad p\in\beta,
\quad q=k-p\in\Lambda_N.
\end{equation*}
For fixed $k$, the admissible set of source indices on a shell is the intersection of the sphere $\abs{p}^2=r$ with a translated cube. The central combinatorial step is to decompose these shell slices according to nearest face and dyadic face height, thereby reducing the counting problem to short coordinate intervals together with the classical two-squares representation function.

The principal outputs are as follows. First, the paper derives an exact formula for the mode-level triad count in the cubic truncation and proves the explicit orbit--triad incidence bound in Proposition~\ref{prop:explicit-incidence}. Second, it records the exact orbit-level enstrophy identity in Proposition~\ref{prop:orbit-enstrophy-identity} together with the algebraic decomposition of the transfer matrix. Third, it proves the deterministic row-sum bound for the raw transfer matrix in Theorem~\ref{thm:raw-row-sum} and supplements the theory with the exact finite-$N$ diagnostics collected in Proposition~\ref{prop:finiteN}.

The paper is organized as follows. Section~\ref{sec:framework} fixes notation and the truncated Fourier setting. Section~\ref{sec:lattice} derives the exact mode-level triad count and basic shell arithmetic. Section~\ref{sec:incidence} develops the face-normalized shell-slice decomposition and proves the explicit incidence bound. Section~\ref{sec:enstrophy} records the orbit-level enstrophy identity and the transfer-matrix decomposition. Section~\ref{sec:sobolev} proves the deterministic Sobolev row-sum bound for the raw matrix $M_N(u)$. Section~\ref{sec:finiteN} contains exact finite-$N$ diagnostics.

\section{Framework and notation}
\label{sec:framework}

\subsection{Fourier variables on the torus}

I work on the periodic cube
\begin{equation*}
\Tthree=[0,2\pi]^3.
\end{equation*}
Let
\begin{equation*}
u(x,t)=\sum_{k\in\Z^3\setminus\{0\}}\hat u_k(t)e^{ik\cdot x}
\end{equation*}
be a mean-zero velocity field. The incompressibility and reality constraints are
\begin{equation*}
k\cdot \hat u_k=0,
\qquad
\hat u_{-k}=\overline{\hat u_k}.
\end{equation*}
For $k\neq0$, let
\begin{equation*}
P(k):=I-\frac{k\otimes k}{\abs{k}^2}
\end{equation*}
denote the Fourier-side Leray projector.

\subsection{Cubic Fourier--Galerkin truncation}

For an integer $N\ge1$, define the truncated nonzero lattice
\begin{equation*}
\Lambda_N:=\{k\in\Z^3\setminus\{0\}:\abs{k}_\infty\le N\}.
\end{equation*}
The Galerkin-truncated Navier--Stokes system is
\begin{equation*}
\partial_t\hat u_k
=
-\nu \abs{k}^2\hat u_k
-i\sum_{\substack{p\in\Lambda_N\\ q=k-p\in\Lambda_N}}
P(k)\bigl[q(\hat u_p\cdot \hat u_q)\bigr],
\qquad
k\in\Lambda_N.
\end{equation*}

\subsection{Shells, symmetry, and orbit notation}

For each integer $r\ge1$, define the shell
\begin{equation*}
S_r:=\{k\in\Lambda_N:\abs{k}^2=r\},
\end{equation*}
and the set of represented shell radii
\begin{equation*}
\mathcal R_N:=\{\abs{k}^2:k\in\Lambda_N\}.
\end{equation*}

The full octahedral group $\Oh$ acts on $\Z^3$ by signed coordinate permutations. This action preserves $\abs{k}_\infty$ and $\abs{k}$, and therefore acts on $\Lambda_N$ and on each shell $S_r$. The set of $\Oh$-orbits in $\Lambda_N$ is denoted by
\begin{equation*}
\mathcal O_N:=\Lambda_N/\Oh.
\end{equation*}
If $\alpha\in\mathcal O_N$, its cardinality is written $\abs{\alpha}$. By orbit--stabilizer,
\begin{equation*}
1\le \abs{\alpha}\le 48.
\end{equation*}

\subsection{Triad counts}

For $k\in\Lambda_N$, let
\begin{equation*}
T(k,N):=\#\{(p,q)\in(\Lambda_N)^2:p+q=k\}
\end{equation*}
denote the number of admissible ordered mode pairs. For orbit indices $\alpha,\beta\in\mathcal O_N$, define the orbit-pair triad set
\begin{equation*}
\mathcal T_{\alpha\beta}
:=
\{(k,p,q):k\in\alpha,\ p\in\beta,\ q=k-p\in\Lambda_N\},
\end{equation*}
and its cardinality
\begin{equation*}
\Gamma_{\alpha\beta}:=\abs{\mathcal T_{\alpha\beta}}.
\end{equation*}

\section{Lattice geometry and exact triad arithmetic}
\label{sec:lattice}

The first structural input is an exact formula for the mode-level triad count in the cubic truncation.

\begin{theorem}[Exact mode-level triad count]
\label{thm:triad-count}
For every $k=(k_1,k_2,k_3)\in\Lambda_N$,
\begin{equation*}
T(k,N)=\prod_{i=1}^3(2N+1-\abs{k_i})-2.
\end{equation*}
\end{theorem}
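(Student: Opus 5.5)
The plan is to parametrize the admissible pairs by the first element alone. Since $q=k-p$ is determined by $p$, we have
\begin{equation*}
T(k,N)=\#\{p\in\Z^3: p\neq 0,\ k-p\neq 0,\ \abs{p}_\infty\le N,\ \abs{k-p}_\infty\le N\}.
\end{equation*}
I would first drop the two nonvanishing conditions and count the lattice points $p$ satisfying only the two sup-norm constraints. Then I would subtract the points that violate $p\neq0$ or $q\neq0$.

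\textbf{Step 1 (box intersection).} The conditions $\abs{p}_\infty\le N$ and $\abs{k-p}_\infty\le N$ decouple coordinatewise into
\begin{equation*}
-N\le p_i\le N, \qquad k_i-N\le p_i\le k_i+N, \qquad i=1,2,3.
\end{equation*}
Hence $p_i$ ranges over the integer interval $[\max(-N,k_i-N),\min(N,k_i+N)]$. Since $\abs{k_i}\le N$, this interval is nonempty: for $k_i\ge0$ it is $[k_i-N,N]$, and for $k_i<0$ it is $[-N,k_i+N]$. In both cases it contains exactly $2N+1-\abs{k_i}$ integers. Multiplying over the three coordinates gives $\prod_{i=1}^3(2N+1-\abs{k_i})$ points.

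\textbf{Step 2 (removing degenerate pairs).} Among these points, I must exclude $p=0$ and $p=k$, which correspond to $q=k$ and $q=0$ respectively. Both points lie in the box: for $p=0$ we have $\abs{0}_\infty\le N$ and $\abs{k}_\infty\le N$, and $p=k$ is handled symmetrically. They are distinct because $k\neq0$ for $k\in\Lambda_N$. For every other point, both $p$ and $q$ are nonzero and lie in $\Lambda_N$. Subtracting these two points yields the formula in Theorem~\ref{thm:triad-count}.

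There is no real obstacle in this argument. The only points needing care are the uniform count $2N+1-\abs{k_i}$ across the sign cases of $k_i$, and the check that exactly two distinct excluded points lie inside the box; both checks rely on the hypotheses $\abs{k}_\infty\le N$ and $k\neq0$.
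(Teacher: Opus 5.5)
Your proof is correct and follows essentially the same route as the paper: parametrize by $p$, use the coordinatewise decoupling of the two sup-norm constraints to get $\prod_{i=1}^3(2N+1-\abs{k_i})$, and then remove the two degenerate pairs $p=0$ and $p=k$. Your explicit treatment of the sign cases of $k_i$, and your check that the two excluded points are distinct and lie in the box, spell out details the paper leaves implicit.
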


\begin{proof}
Fix $k\in\Lambda_N$. A pair $(p,q)$ contributes to $T(k,N)$ if and only if
\begin{equation*}
p,q\in\Lambda_N,
\qquad
p+q=k.
\end{equation*}
Since $q=k-p$, it suffices to count the admissible values of $p$.

For each coordinate $i\in\{1,2,3\}$, the conditions
\begin{equation*}
-N\le p_i\le N,
\qquad
-N\le k_i-p_i\le N
\end{equation*}
imply that $p_i$ must lie in the intersection
\begin{equation*}
[-N,N]\cap[k_i-N,k_i+N].
\end{equation*}
The number of integers in that interval is exactly
\begin{equation*}
2N+1-\abs{k_i}.
\end{equation*}
Because the coordinates decouple, the number of pairs $(p,q)$ with
\begin{equation*}
p+q=k,
\qquad
p,q\in[-N,N]^3
\end{equation*}
is
\begin{equation*}
\prod_{i=1}^3(2N+1-\abs{k_i}).
\end{equation*}
Among these pairs, precisely two use the forbidden zero mode, namely
\begin{equation*}
(p,q)=(0,k)
\qquad\text{and}\qquad
(p,q)=(k,0).
\end{equation*}
Since $0\notin\Lambda_N$, they must be removed. Therefore
\begin{equation*}
T(k,N)=\prod_{i=1}^3(2N+1-\abs{k_i})-2.
\end{equation*}
\end{proof}

\begin{corollary}[Maximum triad count]
\label{cor:max-triad}
There exists $C>0$ such that
\begin{equation*}
\max_{k\in\Lambda_N}T(k,N)\le C N^3.
\end{equation*}
\begin{equation*}
\max_{k\in\Lambda_N}T(k,N)=2N(2N+1)^2-2.
\end{equation*}
\end{corollary}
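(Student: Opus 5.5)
By Theorem~\ref{thm:triad-count}, the whole task reduces to maximizing
\begin{equation*}
F(k):=\prod_{i=1}^3(2N+1-\abs{k_i})
\end{equation*}
over $k\in\Lambda_N$, since $T(k,N)=F(k)-2$. The plan is to show that the maximum of $F$ is $2N(2N+1)^2$, reached at $k=\pm e_i$, and then read off the $CN^3$ bound from the exact value.

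The key observation is that each factor $2N+1-\abs{k_i}$ is a positive integer, at least $N+1$ because $\abs{k_i}\le N$. Each factor is also strictly decreasing in $\abs{k_i}$. So $F$ is monotone decreasing in each $\abs{k_i}$ separately, and we want every $\abs{k_i}$ as small as possible. The only constraint stopping all three from vanishing is $k\neq0$, so at least one coordinate has $\abs{k_j}\ge1$.

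For the upper bound, fix an index $j$ with $\abs{k_j}\ge1$. Bound its factor by $2N+1-\abs{k_j}\le 2N$, and the other two factors by $2N+1$. This gives $F(k)\le 2N(2N+1)^2$ for every $k\in\Lambda_N$. Equality holds at $k=e_1=(1,0,0)$, which lies in $\Lambda_N$ for every $N\ge1$, because $F(e_1)=2N(2N+1)^2$. Hence
\begin{equation*}
\max_{k\in\Lambda_N}T(k,N)=2N(2N+1)^2-2 .
\end{equation*}

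The polynomial bound follows from this identity. For $N\ge1$ we have $2N+1\le 3N$, so
\begin{equation*}
\max_{k\in\Lambda_N}T(k,N)\le 2N\cdot 9N^2=18N^3 .
\end{equation*}
Thus $C=18$ works.

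There is no real obstacle here. The only point needing care is to use the constraint $k\neq0$ correctly: it forces at least one factor down to $2N$. Without it, the unconstrained maximum $(2N+1)^3$ at $k=0$ would be wrongly reported.
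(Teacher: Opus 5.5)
Your proposal is correct and follows the same route as the paper: you use Theorem~\ref{thm:triad-count} to reduce to maximizing $\prod_{i}(2N+1-\abs{k_i})$ subject to $k\neq0$, attain the maximum at the axial vectors, and read off the $O(N^3)$ bound. Your explicit estimate $F(k)\le 2N(2N+1)^2$, which uses positivity of the factors and one coordinate with $\abs{k_j}\ge1$, actually supplies the justification the paper only asserts, and $C=18$ is a valid constant.
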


\begin{proof}
The product
\begin{equation*}
\prod_{i=1}^3(2N+1-\abs{k_i})
\end{equation*}
is maximized, subject to $k\neq0$, when exactly one coordinate has absolute value $1$ and the other two vanish. Thus the maximizing modes are the six axial vectors
\begin{equation*}
(\pm1,0,0),\quad (0,\pm1,0),\quad (0,0,\pm1).
\end{equation*}
For $k=(1,0,0)$,
\begin{equation*}
T(k,N)=2N(2N+1)^2-2,
\end{equation*}
and the $O(N^3)$ estimate follows immediately.
\end{proof}

\begin{lemma}[Number of represented shells]
\label{lem:number-shells}
There exists $C>0$ such that
\begin{equation*}
\abs{\mathcal R_N}\le C N^2.
\end{equation*}
\end{lemma}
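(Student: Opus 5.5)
The plan is to bound the set of attainable values of the squared Euclidean norm directly. Every $k\in\Lambda_N$ satisfies $\abs{k}_\infty\le N$, hence
\begin{equation*}
1\le \abs{k}^2=k_1^2+k_2^2+k_3^2\le 3N^2 .
\end{equation*}
The lower bound comes from $k\neq0$ and integrality. Since $\abs{k}^2$ is an integer, this gives the inclusion
\begin{equation*}
\mathcal R_N\subseteq\{1,2,\dots,3N^2\}.
\end{equation*}
Consequently $\abs{\mathcal R_N}\le 3N^2$, and the statement holds with $C=3$.

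The steps, in order, are as follows. First, recall the definition of $\mathcal R_N$ as the image of $\Lambda_N$ under $k\mapsto\abs{k}^2$. Second, note that this map takes integer values. Third, bound its range using the $\ell^\infty$ constraint defining $\Lambda_N$. Finally, count the integers in $[1,3N^2]$.

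There is no real obstacle. The only point needing care is to use the cube truncation $\abs{k}_\infty\le N$, and not a Euclidean ball, when bounding the largest radius. This is what produces the factor $3$. No arithmetic input, such as Legendre's three-square theorem, is needed for the upper bound. That input would matter only for a matching lower bound of order $N^2$, which the statement does not ask for.
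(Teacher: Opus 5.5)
Your proof is correct and is essentially identical to the paper's: both use $\abs{k}_\infty\le N$ and $k\neq 0$ to place the integer $\abs{k}^2$ in $\{1,\dots,3N^2\}$. This gives $\abs{\mathcal R_N}\le 3N^2$.
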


\begin{proof}
If $k\in\Lambda_N$, then $\abs{k_i}\le N$ for each $i$, hence
\begin{equation*}
1\le \abs{k}^2\le 3N^2.
\end{equation*}
Every represented shell radius is therefore an integer in $\{1,\dots,3N^2\}$, and so
\begin{equation*}
\abs{\mathcal R_N}\le 3N^2.
\end{equation*}
\end{proof}

\section{Face-normalized shell slices and explicit incidence bounds}
\label{sec:incidence}

\subsection{Shell slices and orbit counts on shells}

Fix a target orbit $\alpha\in\mathcal O_N$, and choose a representative $k\in\alpha$. For each represented shell radius $r\in\mathcal R_N$, define the shell slice
\begin{equation*}
A_r(k):=\{p\in\Lambda_N:\abs{p}^2=r,\ k-p\in\Lambda_N\}.
\end{equation*}
\begin{equation*}
A_r(k)=S_r\cap B(k),
\end{equation*}
where
\begin{equation*}
B(k):=\prod_{j=1}^3[k_j-N,k_j+N]\cap\Z^3.
\end{equation*}
If $\beta\subset S_r$ is a source orbit contained in shell $S_r$, define
\begin{equation*}
m_r(k,\beta):=\abs{A_r(k)\cap\beta}.
\end{equation*}

\begin{lemma}[Equivariance identity]
\label{lem:equivariance}
Let $\alpha,\beta\in\mathcal O_N$, let $k\in\alpha$, and let $r=\abs{p}^2$ for $p\in\beta$. Then
\begin{equation*}
\Gamma_{\alpha\beta}=\abs{\alpha}\,m_r(k,\beta).
\end{equation*}
\begin{equation*}
\sqrt{\Gamma_{\alpha\beta}}\le \sqrt{48}\,\sqrt{m_r(k,\beta)}.
\end{equation*}
\end{lemma}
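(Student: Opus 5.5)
The plan is to partition the triad set $\mathcal T_{\alpha\beta}$ according to its first component and then use the $\Oh$-action to show that every fibre has the same size. For $k'\in\alpha$ set
\begin{equation*}
F(k'):=\{p\in\beta:\ k'-p\in\Lambda_N\}=A_r(k')\cap\beta .
\end{equation*}
The equality $A_r(k')\cap\beta=\{p\in\beta: k'-p\in\Lambda_N\}$ holds because $\beta\subset S_r$: every $p\in\beta$ lies in $\Lambda_N$ and satisfies $\abs{p}^2=r$. Thus $\abs{F(k')}=m_r(k',\beta)$. The map $(k',p,q)\mapsto k'$ sends $\mathcal T_{\alpha\beta}$ onto $\alpha$, and its fibre over $k'$ is in bijection with $F(k')$ via $q=k'-p$. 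This gives
\begin{equation*}
\Gamma_{\alpha\beta}=\sum_{k'\in\alpha} m_r(k',\beta).
\end{equation*}

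The key step, and the only one needing care, is to show that $k'\mapsto m_r(k',\beta)$ is constant on $\alpha$. Take $k'\in\alpha$ and choose $g\in\Oh$ with $gk=k'$. Each $g$ is a signed permutation matrix, so it is linear and preserves $\abs{\cdot}_\infty$. It therefore maps $\Lambda_N$ bijectively onto itself, and it maps $\beta$ onto $\beta$ because $\beta$ is an orbit. For $p\in\beta$, linearity gives $k'-gp=g(k-p)$, so $k'-gp\in\Lambda_N$ if and only if $k-p\in\Lambda_N$. Hence $p\mapsto gp$ is a bijection $F(k)\to F(k')$, and so $m_r(k',\beta)=m_r(k,\beta)$. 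Summing over the $\abs{\alpha}$ points of $\alpha$ yields $\Gamma_{\alpha\beta}=\abs{\alpha}\,m_r(k,\beta)$. The same argument shows the right-hand side does not depend on which representative $k$ is chosen.

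The second displayed inequality follows at once. Orbit--stabilizer gives $\abs{\alpha}\le48$, so $\Gamma_{\alpha\beta}\le 48\,m_r(k,\beta)$, and taking square roots of these nonnegative quantities gives the claim. I expect no real obstacle here. The main points to state explicitly are that $\Oh$ preserves $\Lambda_N$ (through $\abs{\cdot}_\infty$) and acts linearly, so the difference constraint is equivariant, and that $r$ is well defined because $\Oh$ preserves $\abs{\cdot}$.
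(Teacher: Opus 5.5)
Your proof is correct and follows the paper's argument. You fibre $\mathcal T_{\alpha\beta}$ over $k'\in\alpha$ to get $\Gamma_{\alpha\beta}=\sum_{k'\in\alpha}\abs{A_r(k')\cap\beta}$, then take $g\in O_h$ with $gk=k'$ to biject the fibres; your linearity step $k'-gp=g(k-p)$ makes explicit what the paper only asserts, and the one harmless slip is that $(k',p,q)\mapsto k'$ need not be onto $\alpha$ (fibres can be empty, e.g.\ when $\Gamma_{\alpha\beta}=0$), which your count never uses.
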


\begin{proof}
By definition,
\begin{equation*}
\Gamma_{\alpha\beta}
=
\#\{(k',p,q):k'\in\alpha,\ p\in\beta,\ q=k'-p\in\Lambda_N\}.
\end{equation*}
For fixed $k'\in\alpha$, the number of admissible $p\in\beta$ is $\abs{A_r(k')\cap\beta}$. Hence
\begin{equation*}
\Gamma_{\alpha\beta}
=
\sum_{k'\in\alpha}\abs{A_r(k')\cap\beta}.
\end{equation*}
If $k',k''\in\alpha$, then $k''=gk'$ for some $g\in\Oh$. Since $g$ preserves $\Lambda_N$, preserves Euclidean norm, and maps $\beta$ to itself, it induces a bijection
\begin{equation*}
A_r(k')\cap\beta \longleftrightarrow A_r(k'')\cap\beta.
\end{equation*}
Hence the quantity $\abs{A_r(k')\cap\beta}$ is independent of the representative $k'\in\alpha$, and
\begin{equation*}
\Gamma_{\alpha\beta}=\abs{\alpha}\,m_r(k,\beta).
\end{equation*}
The square-root estimate follows from $\abs{\alpha}\le 48$.
\end{proof}

\subsection{Face heights and normalized patches}

The cube $B(k)$ has six coordinate faces, indexed by pairs $f=(j,\sigma)$ with
\begin{equation*}
j\in\{1,2,3\},
\qquad
\sigma\in\{\pm1\}.
\end{equation*}
The corresponding face plane is
\begin{equation*}
p_j=k_j+\sigma N.
\end{equation*}
For $p\in B(k)$, define the inward face-height
\begin{equation*}
h_f(p,k):=
\begin{cases}
k_j+N-p_j, & f=(j,+1),\\[1mm]
p_j-(k_j-N), & f=(j,-1).
\end{cases}
\end{equation*}
Set
\begin{equation*}
h_{\min}(p,k):=\min_f h_f(p,k),
\end{equation*}
and let $f(p)$ be the lexicographically first face where this minimum is attained.

If $h_{\min}(p,k)=0$, define $H(p):=1$. If $h_{\min}(p,k)\ge1$, let $H(p)$ be the unique dyadic number such that
\begin{equation*}
H(p)\le h_{\min}(p,k)<2H(p).
\end{equation*}

\begin{definition}[Normalized face patch]
For $r\in\mathcal R_N$, a signed face $f$, and a dyadic scale $1\le H\le N$, define
\begin{equation*}
A_{r,f,H}(k):=
\{p\in A_r(k):f(p)=f,\ H(p)=H\}.
\end{equation*}
\end{definition}

\begin{lemma}[Patch decomposition]
\label{lem:patch-decomposition}
For every $k\in\Lambda_N$ and every $r\in\mathcal R_N$,
\begin{equation*}
A_r(k)=\bigsqcup_f\ \bigsqcup_{\substack{H\ \mathrm{dyadic}\\1\le H\le N}}A_{r,f,H}(k).
\end{equation*}
\end{lemma}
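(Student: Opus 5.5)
The plan is to show that the two labels $f(p)$ and $H(p)$ are well defined and single-valued for every $p\in A_r(k)$, and that $H(p)$ always lies among the dyadic scales $1\le H\le N$. Once that is in place, the sets $A_{r,f,H}(k)$ are exactly the fibres of the map $p\mapsto (f(p),H(p))$ restricted to $A_r(k)$. The fibres of a function are pairwise disjoint, and their union over all attained values is the whole domain, so the decomposition follows.

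\emph{Well-definedness.} First I will check that the face heights are nonnegative integers for $p\in A_r(k)$. Indeed $A_r(k)=S_r\cap B(k)$, so $k_j-N\le p_j\le k_j+N$ for every $j$. Hence each $h_f(p,k)\in\{0,1,\dots,2N\}$. The minimum $h_{\min}(p,k)$ is taken over the finite set of six faces. The lexicographic tie-break therefore selects a unique face $f(p)$.

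The value $H(p)$ is also unique:
\begin{itemize}
\item if $h_{\min}=0$, then by definition $H(p)=1$;
\item if $h_{\min}\ge1$, then $H(p)=2^{\lfloor\log_2 h_{\min}\rfloor}$, which is the unique dyadic number with $H(p)\le h_{\min}<2H(p)$.
\end{itemize}

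\emph{Range of $H$.} This is the only step needing a real estimate: I must show $h_{\min}(p,k)\le N$, which then gives $H(p)\le h_{\min}\le N$. The idea is that the two opposite faces in each direction $j$ have heights summing to the side length of the cube:
\[
h_{(j,+1)}(p,k)+h_{(j,-1)}(p,k)=(k_j+N-p_j)+(p_j-k_j+N)=2N.
\]
So the smaller of these two heights is at most $N$, and in particular $h_{\min}\le N$. Combined with $H(p)\ge1$ in both cases, this places $H(p)$ in the index range $1\le H\le N$.

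\emph{Conclusion.} Every $p\in A_r(k)$ belongs to exactly one set $A_{r,f,H}(k)$, namely the one with $f=f(p)$ and $H=H(p)$, and each $A_{r,f,H}(k)\subseteq A_r(k)$ by definition. This proves both disjointness and exhaustion. I do not expect any genuine obstacle; the only point requiring care is the bound $h_{\min}\le N$ above, which ensures that no point of $A_r(k)$ falls outside the listed dyadic range.
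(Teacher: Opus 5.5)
Your proof is correct and follows the same route as the paper: each $p\in A_r(k)$ receives a unique label $(f(p),H(p))$, and the patches are exactly the fibres of this labelling. You also verify, via $h_{(j,+1)}(p,k)+h_{(j,-1)}(p,k)=2N$, that $H(p)\le h_{\min}(p,k)\le N$, so every point falls in the stated dyadic range; the paper's one-line proof leaves this implicit.
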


\begin{proof}
Each point $p\in A_r(k)$ has a unique minimizing face $f(p)$ and a unique dyadic height scale $H(p)$. This assigns $p$ to exactly one patch.
\end{proof}

\begin{lemma}[Patch square-root estimate]
\label{lem:patch-sqrt}
For every fixed $k,r,f,H$,
\begin{equation*}
\sum_{\beta\subset S_r}\sqrt{\abs{A_{r,f,H}(k)\cap\beta}}
\le
\abs{A_{r,f,H}(k)}.
\end{equation*}
\end{lemma}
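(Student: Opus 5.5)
The inequality is elementary, and I would prove it using only two facts: the source orbits contained in the shell $S_r$ partition $S_r$, and $\sqrt{n}\le n$ for every nonnegative integer $n$. Fix $k,r,f,H$ and write $P:=A_{r,f,H}(k)$. By definition $P\subset A_r(k)\subset S_r$. Since $\Oh$ preserves $\Lambda_N$ and the Euclidean norm, each orbit $\beta\in\mathcal O_N$ either lies entirely in $S_r$ or is disjoint from it. The orbits $\beta\subset S_r$ are pairwise disjoint and their union is $S_r$. It follows that the sets $P\cap\beta$, with $\beta$ ranging over the orbits in $S_r$, partition $P$:
\begin{equation*}
\sum_{\beta\subset S_r}\abs{P\cap\beta}=\abs{P}.
\end{equation*}

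Next I would bound each summand termwise. Put $n_\beta:=\abs{P\cap\beta}$, a nonnegative integer. If $n_\beta=0$, then $\sqrt{n_\beta}=0=n_\beta$. If $n_\beta\ge1$, then $\sqrt{n_\beta}\le n_\beta$, since this is equivalent to $n_\beta\le n_\beta^2$. In either case $\sqrt{n_\beta}\le n_\beta$. Summing over the finitely many orbits in $S_r$ and using the partition identity gives
\begin{equation*}
\sum_{\beta\subset S_r}\sqrt{\abs{P\cap\beta}}\le\sum_{\beta\subset S_r}\abs{P\cap\beta}=\abs{A_{r,f,H}(k)},
\end{equation*}
which is the claim.

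No step here is a genuine obstacle. The only point to state carefully is the partition property: the patch $P$ lies inside a single shell, and orbits do not straddle shells. This needs the $\Oh$-invariance of $\abs{\cdot}$ and of $\Lambda_N$, which the paper records in the framework section. The bound makes no use of the face or dyadic structure of the patch. That structure matters only later, when $\abs{A_{r,f,H}(k)}$ itself is estimated through the two-squares representation function.
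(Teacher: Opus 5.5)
Your proof is correct and follows the paper's argument: you bound each summand by $\sqrt{n}\le n$ for nonnegative integers, then sum using the fact that the orbits in $S_r$ partition the patch. You are slightly more careful than the paper, which uses the identity $\sum_{\beta\subset S_r}\abs{A_{r,f,H}(k)\cap\beta}=\abs{A_{r,f,H}(k)}$ without noting explicitly that $\Oh$-orbits cannot straddle shells.
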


\begin{proof}
For each shell orbit $\beta\subset S_r$, set
\begin{equation*}
m_{r,f,H}(k,\beta):=\abs{A_{r,f,H}(k)\cap\beta}.
\end{equation*}
Since $m_{r,f,H}(k,\beta)$ is a nonnegative integer, $\sqrt{m_{r,f,H}(k,\beta)}\le m_{r,f,H}(k,\beta)$. Summing over $\beta\subset S_r$ yields
\begin{equation*}
\sum_{\beta\subset S_r}\sqrt{m_{r,f,H}(k,\beta)}
\le
\sum_{\beta\subset S_r}m_{r,f,H}(k,\beta)
=
\abs{A_{r,f,H}(k)}.
\end{equation*}
\end{proof}

\begin{lemma}[Distinguished coordinate interval]
\label{lem:coordinate-interval}
Fix $k\in\Lambda_N$, a signed face $f=(j,\sigma)$, and a dyadic scale $1\le H\le N$. Then the distinguished coordinate $\nu:=p_j$ of every point $p\in A_{r,f,H}(k)$ belongs to an interval containing at most $C H$ integers, where $C$ is an absolute constant.
\end{lemma}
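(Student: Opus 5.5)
The plan is to read the constraint on $p_j$ directly off the definitions of the patch, without using the shell condition $\abs{p}^2=r$ at all. Membership $p\in A_{r,f,H}(k)$ forces two things. First, $f(p)=f=(j,\sigma)$, so the minimum face height $h_{\min}(p,k)$ is attained at the face $f$, that is, $h_{\min}(p,k)=h_f(p,k)$. Second, $H(p)=H$, which pins down the size of $h_f(p,k)$. Since $h_f(p,k)$ is an affine function of $p_j$ with slope $\pm1$ once $k$ and $f$ are fixed, a bound on its range is the same as a bound on the range of $p_j$.

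I will carry this out in two cases according to whether $H=1$ or $H\ge2$.

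\emph{Case $H=1$.} By the definition of $H(p)$, the value $H(p)=1$ means $h_{\min}(p,k)\in\{0\}\cup[1,2)$. Since $h_{\min}$ is integer-valued, this gives $h_f(p,k)\in\{0,1\}$.

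\emph{Case $H\ge 2$.} Here $H\le h_f(p,k)<2H$.

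In both cases $h_f(p,k)$ lies in an integer interval $I_H$ of length less than $2H$, namely $I_1=\{0,1\}$ and $I_H=[H,2H)$ for $H\ge2$. Each $I_H$ contains at most $2H$ integers.

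Next I invert the face-height formula:
\begin{equation*}
p_j=\begin{cases} k_j+N-h, & \sigma=+1,\\ k_j-N+h, & \sigma=-1,\end{cases}
\qquad h:=h_f(p,k).
\end{equation*}
The map $h\mapsto p_j$ is a bijection of $\Z$, and it depends only on the fixed data $k$, $N$ and $f$. It therefore carries $I_H$ onto an integer interval $J_{f,H}(k)$ with the same number of points, so that
\begin{equation*}
\#\bigl(J_{f,H}(k)\cap\Z\bigr)\le 2H.
\end{equation*}
Thus every $p\in A_{r,f,H}(k)$ satisfies $p_j\in J_{f,H}(k)$, and the statement holds with the absolute constant $C=2$. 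The interval $J_{f,H}(k)$ is also independent of $r$, which is convenient for the later summation over shells. If one prefers, one may intersect $J_{f,H}(k)$ with $[-N,N]$ coming from $p\in\Lambda_N$, but this is not needed.

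There is no real obstacle in this argument. The only point requiring care is the boundary case $h_{\min}=0$, where the convention $H(p)=1$ must be absorbed into the $H=1$ interval as above.
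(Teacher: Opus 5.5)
Your proposal is correct and follows the paper's own proof: it pins $h_f(p,k)$ to $\{0,1\}$ when $H=1$ and to $[H,2H)$ when $H>1$, then inverts the affine relation to get an interval for $p_j$ with at most $2H$ integers. The two small refinements you add are harmless. You state explicitly that $f(p)=f$ gives $h_{\min}=h_f$, which the paper uses without comment, and you handle both signs $\sigma=\pm1$ at once through the inversion formula, where the paper treats $\sigma=+1$ and calls the other case analogous.
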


\begin{proof}
Assume first that $f=(j,+1)$. Then $h_f(p,k)=k_j+N-p_j$.

If $H=1$, the condition $H(p)=1$ implies $0\le h_f(p,k)<2$, so
\begin{equation*}
k_j+N-1\le p_j\le k_j+N.
\end{equation*}
This interval contains at most two integers.

If $H>1$, the condition $p\in A_{r,f,H}(k)$ implies
\begin{equation*}
H\le h_f(p,k)<2H,
\end{equation*}
hence
\begin{equation*}
H\le k_j+N-p_j<2H.
\end{equation*}
\begin{equation*}
k_j+N-2H<p_j\le k_j+N-H.
\end{equation*}
This interval contains at most $2H$ integers.

The case $f=(j,-1)$ is analogous. There one has $h_f(p,k)=p_j-(k_j-N)$, and the same argument gives an interval of length $O(H)$. This proves the claim.
\end{proof}

\begin{lemma}[Patchwise two-squares bound]
\label{lem:two-squares-patch}
For every $\varepsilon>0$ there exists $C_\varepsilon>0$ such that
\begin{equation*}
\abs{A_{r,f,H}(k)}\le C_\varepsilon H N^\varepsilon
\end{equation*}
uniformly in $k\in\Lambda_N$, $r\in\mathcal R_N$, signed faces $f$, and dyadic scales $1\le H\le N$.
\end{lemma}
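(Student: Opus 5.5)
The plan is to fiber the patch $A_{r,f,H}(k)$ over the distinguished coordinate. Once $p_j$ is fixed, what remains is a two-squares problem. Write $f=(j,\sigma)$ and let $\{a,b\}=\{1,2,3\}\setminus\{j\}$ be the two remaining coordinate indices. Then
\begin{equation*}
\abs{A_{r,f,H}(k)}
\le
\sum_{\nu\in I_{f,H}}\#\{(p_a,p_b)\in\Z^2:\ p_a^2+p_b^2=r-\nu^2\},
\end{equation*}
where $I_{f,H}$ is the interval of Lemma~\ref{lem:coordinate-interval}, which contains at most $CH$ integers. Here we simply drop the remaining constraints that $p$ lies in $B(k)$, that $f(p)=f$, and that $p\in\Lambda_N$. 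Dropping them can only increase the count.

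The inner quantity is the classical representation function $r_2(m)$ with $m=r-\nu^2$. There are two cases.
\begin{itemize}[leftmargin=2em]
\item If $m<0$, then $r_2(m)=0$.
\item If $m>0$, we use the divisor bound $r_2(m)=4\sum_{d\mid m}\chi_{-4}(d)\le 4\,d(m)\le C_\varepsilon m^{\varepsilon}$. Since $m\le r\le 3N^2$, this gives $r_2(m)\le C_\varepsilon' N^{2\varepsilon}$.
\end{itemize}
Replacing $\varepsilon$ by $\varepsilon/2$ then gives the uniform per-fiber bound $C_\varepsilon N^\varepsilon$.

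The case $m=0$ needs separate treatment, since $r_2(0)=1$ but the divisor bound does not apply. This happens only when $\nu^2=r$, so at most two values of $\nu$ are affected, and each such fiber contributes the single point $(0,0)$. This adds at most $2\le 2H$ to the count.

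Summing the fibers over the at most $CH$ values of $\nu\in I_{f,H}$ gives
\begin{equation*}
\abs{A_{r,f,H}(k)}\le CH\cdot C_\varepsilon N^\varepsilon+2\le C'_\varepsilon H N^\varepsilon.
\end{equation*}
The constants depend only on $\varepsilon$, the absolute constant of Lemma~\ref{lem:coordinate-interval}, and the factor $3$ in $r\le 3N^2$ from Lemma~\ref{lem:number-shells}. They are therefore uniform in $k$, $r$, $f$ and $H$.

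The main step is citing the divisor bound $d(m)\ll_\varepsilon m^\varepsilon$ with a constant independent of $m$. This is standard (Hardy--Wright), so I would simply invoke it. The rest is bookkeeping: in particular, the $m=0$ fibers must be handled separately as above, and the bound must not depend on the position of $k$, which holds because Lemma~\ref{lem:coordinate-interval} already controls only the length of $I_{f,H}$.
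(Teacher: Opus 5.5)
Your proposal is correct and follows the paper's own proof. You fiber over the distinguished coordinate $p_j$, which ranges over $O(H)$ integers by Lemma~\ref{lem:coordinate-interval}, bound each fiber by $r_2(r-\nu^2)\le C_\varepsilon N^{\varepsilon}$ via the divisor bound, and sum. Your separate handling of the $m=0$ fibers (where $C_\varepsilon m^\varepsilon=0$ cannot dominate $r_2(0)=1$) and your explicit $\varepsilon\to\varepsilon/2$ rescaling (to pass from $(3N^2)^\varepsilon$ to $N^\varepsilon$) tidy up two points that the paper's ``enlarging $C_\varepsilon$'' step glosses over.
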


\begin{proof}
Fix $k,r,f,H$, and write $f=(j,\sigma)$. By Lemma~\ref{lem:coordinate-interval}, the distinguished coordinate $u:=p_j$ takes at most $C H$ integer values.

Fix one such value of $u$. Then the shell condition $\abs{p}^2=r$ becomes
\begin{equation*}
p_\ell^2+p_m^2=r-u^2,
\end{equation*}
where $\{\ell,m\}=\{1,2,3\}\setminus\{j\}$. Hence the number of admissible transverse pairs $(p_\ell,p_m)\in\Z^2$ is bounded by the two-squares representation function
\begin{equation*}
r_2(r-u^2):=\#\{(a,b)\in\Z^2:a^2+b^2=r-u^2\}.
\end{equation*}
For every $\varepsilon>0$, the classical divisor-function bound for sums of two squares gives
\begin{equation*}
r_2(n)\le C_\varepsilon n^\varepsilon.
\end{equation*}
see, for example, Hardy and Wright~\cite{HardyWright}.
Since $0\le r\le 3N^2$, I obtain after enlarging $C_\varepsilon$ if necessary
\begin{equation*}
r_2(r-u^2)\le C_\varepsilon N^\varepsilon.
\end{equation*}
Summing over the $O(H)$ admissible values of $u$ yields
\begin{equation*}
\abs{A_{r,f,H}(k)}\le C_\varepsilon H N^\varepsilon.
\end{equation*}
\end{proof}

\subsection{Explicit shell-counting at fixed face and scale}

\begin{lemma}[Explicit shell-count bound for a fixed face-scale patch]
\label{lem:explicit-face-scale-shell-count}
There exists an absolute constant $C>0$ such that for every $k\in\Lambda_N$, every signed face $f$, and every dyadic scale $1\le H\le N$,
\begin{equation*}
\#\{r\in\mathcal R_N:A_{r,f,H}(k)\neq\varnothing\}\le C N^2 H.
\end{equation*}
\end{lemma}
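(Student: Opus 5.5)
The bound is coarse, so the plan is to bound the number of shells by the number of lattice points that can lie in the patch, summed over all radii, rather than to use any arithmetic of $r$. Every nonempty patch $A_{r,f,H}(k)$ contains at least one point $p$ with $\abs{p}^2=r$. Hence the map $p\mapsto\abs{p}^2$ from
\begin{equation*}
U_{f,H}(k):=\{p\in\Lambda_N\cap B(k): f(p)=f,\ H(p)=H\}
\end{equation*}
onto $\{r\in\mathcal R_N: A_{r,f,H}(k)\neq\varnothing\}$ is surjective. So it suffices to show $\abs{U_{f,H}(k)}\le C N^2 H$.

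To count $U_{f,H}(k)$, write $f=(j,\sigma)$. Lemma~\ref{lem:coordinate-interval} does not use the shell condition $\abs{p}^2=r$; it uses only $f(p)=f$ and $H(p)=H$. It therefore shows that the distinguished coordinate $p_j$ of any $p\in U_{f,H}(k)$ ranges over at most $CH$ integers. The two transverse coordinates $p_\ell,p_m$ are constrained only by $p\in B(k)$ together with $\abs{p_\ell},\abs{p_m}\le N$, so each takes at most $2N+1$ values. Multiplying gives
\begin{equation*}
\abs{U_{f,H}(k)}\le CH(2N+1)^2\le C' N^2 H,
\end{equation*}
uniformly in $k$, $f$ and $H$, which proves the lemma.

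There is no real obstacle here. The only point to check is that the interval bound of Lemma~\ref{lem:coordinate-interval} does not depend on $r$, and its proof confirms this. The $H=1$ case also includes points with $h_{\min}=0$, and these are covered by the two-integer interval in that lemma. One could alternatively note that $\abs{\mathcal R_N}\le 3N^2$ by Lemma~\ref{lem:number-shells}, which already yields the bound for every $H\ge1$. I would mention this as a one-line alternative, but present the fibre-counting argument, since it is the version that carries the factor $H$ naturally into the subsequent incidence estimate.
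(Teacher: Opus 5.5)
Your argument is correct and is essentially the paper's proof. The paper likewise fixes the distinguished coordinate $p_j$, which takes at most $CH$ values by Lemma~\ref{lem:coordinate-interval}, and bounds the transverse contribution by $O(N^2)$. It counts the at most $2N^2+1$ values of $p_\ell^2+p_m^2$ where you count $(2N+1)^2$ transverse pairs, and it picks one point per shell where you phrase this as a map from $U_{f,H}(k)$, whose image need only contain the shell set. Your side remark is also correct: $\abs{\mathcal R_N}\le 3N^2$ from Lemma~\ref{lem:number-shells} already gives the stated bound for every $H\ge1$.
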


\begin{proof}
Fix $k\in\Lambda_N$, a signed face $f=(j,\sigma)$, and a dyadic scale $H$. Define
\begin{equation*}
\mathcal R_{f,H}(k):=\{r\in\mathcal R_N:A_{r,f,H}(k)\neq\varnothing\}.
\end{equation*}
For each $r\in\mathcal R_{f,H}(k)$, choose one point $p=(p_1,p_2,p_3)\in A_{r,f,H}(k)$. Then
\begin{equation*}
r=p_1^2+p_2^2+p_3^2.
\end{equation*}
By Lemma~\ref{lem:coordinate-interval}, the distinguished coordinate $u:=p_j$ belongs to a set of at most $C_1H$ integers.

Fix one admissible value of $u$. Then
\begin{equation*}
r=u^2+p_\ell^2+p_m^2,
\end{equation*}
where $\{\ell,m\}=\{1,2,3\}\setminus\{j\}$. Since $p\in\Lambda_N$,
\begin{equation*}
-N\le p_\ell,p_m\le N,
\end{equation*}
and therefore
\begin{equation*}
0\le p_\ell^2+p_m^2\le 2N^2.
\end{equation*}
Thus, for fixed $u$, the quantity $p_\ell^2+p_m^2$ can assume at most $2N^2+1$ distinct values. Each such value determines $r$ uniquely, so the number of possible shell radii corresponding to that $u$ is at most $2N^2+1$. Summing over all admissible $u$, I obtain
\begin{equation*}
\abs{\mathcal R_{f,H}(k)}
\le
C_1H(2N^2+1)
\le C N^2 H.
\end{equation*}
\end{proof}

\begin{remark}
The preceding lemma follows from the face-height decomposition alone. A sharper $O(NH)$ shell-window bound would require an additional geometric or arithmetic input.
\end{remark}

\subsection{Explicit incidence bound}

\begin{proposition}[Explicit orbit--triad incidence bound]
\label{prop:explicit-incidence}
For every $\varepsilon>0$ there exists $C_\varepsilon>0$ such that
\begin{equation*}
\max_{\alpha\in\mathcal O_N}
\sum_{\beta\in\mathcal O_N}\sqrt{\Gamma_{\alpha\beta}}
\le
C_\varepsilon N^{4+\varepsilon}.
\end{equation*}
\end{proposition}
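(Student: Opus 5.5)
Fix $\alpha\in\mathcal O_N$ with representative $k\in\alpha$. By Lemma~\ref{lem:equivariance}, $\sqrt{\Gamma_{\alpha\beta}}\le\sqrt{48}\,\sqrt{m_r(k,\beta)}$, where $r$ is the shell radius of $\beta$. Every orbit $\beta$ lies in exactly one shell $S_r$ with $r\in\mathcal R_N$, and $m_r(k,\beta)=0$ contributes nothing. So it suffices to bound
\begin{equation*}
\sum_{r\in\mathcal R_N}\sum_{\beta\subset S_r}\sqrt{m_r(k,\beta)}
\end{equation*}
uniformly in $k$.

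For fixed $r$ and $\beta$, Lemma~\ref{lem:patch-decomposition} splits $A_r(k)\cap\beta$ disjointly into the pieces $A_{r,f,H}(k)\cap\beta$. Subadditivity of the square root, $\sqrt{\sum a_i}\le\sum\sqrt{a_i}$, then gives
\begin{equation*}
\sqrt{m_r(k,\beta)}\le\sum_f\sum_H\sqrt{\abs{A_{r,f,H}(k)\cap\beta}}.
\end{equation*}
Interchange the sums and apply Lemma~\ref{lem:patch-sqrt} to each fixed $(r,f,H)$. This bounds the inner $\beta$-sum by $\abs{A_{r,f,H}(k)}$, so the whole quantity is at most
\begin{equation*}
\sum_f\sum_{H}\sum_{r\in\mathcal R_{f,H}(k)}\abs{A_{r,f,H}(k)},
\end{equation*}
where only those $r$ with a nonempty patch contribute.

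Two bounds now finish the count. Lemma~\ref{lem:two-squares-patch} gives $\abs{A_{r,f,H}(k)}\le C_\varepsilon HN^\varepsilon$, and Lemma~\ref{lem:explicit-face-scale-shell-count} gives at most $CN^2H$ contributing shells. Each $(f,H)$ term is therefore at most $C_\varepsilon N^{2+\varepsilon}H^2$. There are six faces, and summing over dyadic $H\le N$ gives a geometric series dominated by $H^2\le N^2$. The total is $O_\varepsilon(N^{4+\varepsilon})$, uniform in $k$ and hence in $\alpha$. Multiplying by $\sqrt{48}$ yields the proposition.

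An alternative route is to bound $\sum_r\abs{A_{r,f,H}(k)}$ directly by the slab size $O(HN^2)$. Summed over dyadic $H$, this gives $O(N^3)$, which is a stronger result. For the stated claim, though, it suffices to follow the lemmas as assembled above.

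The main obstacle is the bookkeeping in the first reduction: subadditivity must be applied before Lemma~\ref{lem:patch-sqrt}, and the sum must be restricted to shells with nonempty patches. Otherwise one does not get the $N^2H$ shell count from Lemma~\ref{lem:explicit-face-scale-shell-count}, only the trivial $3N^2$ from Lemma~\ref{lem:number-shells}. The remaining steps are direct substitutions.
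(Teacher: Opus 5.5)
Your proposal is correct and follows the paper's proof step for step. The steps match: the equivariance reduction, then the patch decomposition with subadditivity applied before Lemma~\ref{lem:patch-sqrt}, then the $CN^2H$ shell count times the $C_\varepsilon HN^\varepsilon$ patch bound, summed over six faces and dyadic $H$. Your side remark is also right: bounding $\sum_r\abs{A_{r,f,H}(k)}$ by the slab size, or directly using $\sum_r\abs{A_r(k)}=T(k,N)\le CN^3$, already gives the stronger $O(N^3)$ bound.
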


\begin{proof}
Fix a target orbit $\alpha\in\mathcal O_N$, and choose a representative $k\in\alpha$. By Lemma~\ref{lem:equivariance},
\begin{equation*}
\sum_{\beta\in\mathcal O_N}\sqrt{\Gamma_{\alpha\beta}}
\le
\sqrt{48}
\sum_{r\in\mathcal R_N}\sum_{\beta\subset S_r}\sqrt{m_r(k,\beta)}.
\end{equation*}
Next decompose $A_r(k)$ into normalized face patches:
\begin{equation*}
A_r(k)=\bigsqcup_f\ \bigsqcup_H A_{r,f,H}(k).
\end{equation*}
For each shell orbit $\beta\subset S_r$,
\begin{equation*}
m_r(k,\beta)=\sum_{f,H}\abs{A_{r,f,H}(k)\cap\beta}.
\end{equation*}
Using the subadditivity of the square root and then Lemma~\ref{lem:patch-sqrt},
\begin{equation*}
\sum_{\beta\subset S_r}\sqrt{m_r(k,\beta)}
\le
\sum_f\sum_H\sum_{\beta\subset S_r}\sqrt{\abs{A_{r,f,H}(k)\cap\beta}}
\le
\sum_f\sum_H \abs{A_{r,f,H}(k)}.
\end{equation*}
Therefore
\begin{equation*}
\sum_{r\in\mathcal R_N}\sum_{\beta\subset S_r}\sqrt{m_r(k,\beta)}
\le
\sum_f\sum_H
\sum_{\substack{r\in\mathcal R_N\\ A_{r,f,H}(k)\neq\varnothing}}
\abs{A_{r,f,H}(k)}.
\end{equation*}
Fix $f$ and $H$. By Lemma~\ref{lem:explicit-face-scale-shell-count},
\begin{equation*}
\#\{r\in\mathcal R_N:A_{r,f,H}(k)\neq\varnothing\}\le C N^2 H.
\end{equation*}
For each such shell, Lemma~\ref{lem:two-squares-patch} gives
\begin{equation*}
\abs{A_{r,f,H}(k)}\le C_\varepsilon H N^\varepsilon.
\end{equation*}
Hence
\begin{equation*}
\sum_{\substack{r\in\mathcal R_N\\ A_{r,f,H}(k)\neq\varnothing}}
\abs{A_{r,f,H}(k)}
\le
C_\varepsilon N^{2+\varepsilon}H^2.
\end{equation*}
There are only six signed faces, so
\begin{equation*}
\sum_{r\in\mathcal R_N}\sum_{\beta\subset S_r}\sqrt{m_r(k,\beta)}
\le
C_\varepsilon N^{2+\varepsilon}
\sum_{\substack{H\ \mathrm{dyadic}\\1\le H\le N}} H^2.
\end{equation*}
Since the dyadic sum satisfies
\begin{equation*}
\sum_{\substack{H\ \mathrm{dyadic}\\1\le H\le N}} H^2\le C N^2,
\end{equation*}
it follows that
\begin{equation*}
\sum_{\beta\in\mathcal O_N}\sqrt{\Gamma_{\alpha\beta}}
\le
C_\varepsilon N^{4+\varepsilon}.
\end{equation*}
The estimate is uniform in $\alpha$, proving the proposition.
\end{proof}

\section{Orbit-level enstrophy dynamics}
\label{sec:enstrophy}

This section derives the exact orbit-level identity for the truncated enstrophy dynamics and isolates the symmetric part of the transfer matrix.

Define the truncated enstrophy
\begin{equation*}
Z_N(t):=\frac12\sum_{k\in\Lambda_N}\abs{k}^2\abs{\hat u_k(t)}^2.
\end{equation*}
For an orbit $\alpha\in\mathcal O_N$, define the orbit enstrophy and orbit dissipation
\begin{equation*}
Z_\alpha(t):=
\frac{1}{2\abs{\alpha}}
\sum_{k\in\alpha}\abs{k}^2\abs{\hat u_k(t)}^2,
\qquad
D_\alpha(t):=
\frac{1}{\abs{\alpha}}
\sum_{k\in\alpha}\abs{k}^4\abs{\hat u_k(t)}^2.
\end{equation*}
Then
\begin{equation*}
Z_N(t)=\sum_{\alpha\in\mathcal O_N}\abs{\alpha}\,Z_\alpha(t).
\end{equation*}

For orbit pairs $(\alpha,\beta)$, define the raw orbit-level transfer coefficient
\begin{equation*}
M_{\alpha\beta}(u)
:=
\frac{1}{\abs{\alpha}}
\sum_{k\in\alpha}
\sum_{\substack{p\in\beta\\ q=k-p\in\Lambda_N}}
\abs{k}^2
\Re\!\left(
\overline{\hat u_k}\cdot[-iP(k)(q(\hat u_p\cdot\hat u_q))]
\right).
\end{equation*}

\begin{proposition}[Orbit-level enstrophy identity]
\label{prop:orbit-enstrophy-identity}
For every smooth Galerkin solution and every orbit $\alpha\in\mathcal O_N$,
\begin{equation*}
\frac{d}{dt}Z_\alpha(t)
=
-\nu D_\alpha(t)
+
\sum_{\beta\in\mathcal O_N}M_{\alpha\beta}(u).
\end{equation*}
Consequently, summing over $\alpha$ with weights $\abs{\alpha}$ recovers the truncated enstrophy balance
\begin{equation*}
\frac{d}{dt}Z_N(t)
=
-\nu\sum_{k\in\Lambda_N}\abs{k}^4\abs{\hat u_k}^2
+
\sum_{k\in\Lambda_N}\abs{k}^2
\Re\!\left(
\overline{\hat u_k}\cdot
\left[
-i\sum_{\substack{p\in\Lambda_N\\ q=k-p\in\Lambda_N}}
P(k)\bigl(q(\hat u_p\cdot\hat u_q)\bigr)
\right]
\right).
\end{equation*}
\end{proposition}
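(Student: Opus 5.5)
The identity follows by differentiating $Z_\alpha$ mode by mode and then grouping the nonlinear sum according to the orbit of the source mode $p$. For each $k\in\Lambda_N$ and a smooth Galerkin solution, $\abs{\hat u_k}^2=\hat u_k\cdot\overline{\hat u_k}$, so
\begin{equation*}
\frac{d}{dt}\abs{\hat u_k}^2 = 2\Re\bigl(\overline{\hat u_k}\cdot\partial_t\hat u_k\bigr).
\end{equation*}
Substituting the Galerkin equation gives two contributions. The viscous term yields $-2\nu\abs{k}^2\abs{\hat u_k}^2$. The nonlinear term yields
\begin{equation*}
2\Re\Bigl(\overline{\hat u_k}\cdot\bigl[-i\textstyle\sum_{p,\,q=k-p\in\Lambda_N}P(k)(q(\hat u_p\cdot\hat u_q))\bigr]\Bigr).
\end{equation*}

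Multiplying by $\abs{k}^2/(2\abs{\alpha})$ and summing over $k\in\alpha$, the viscous part becomes
\begin{equation*}
-\frac{\nu}{\abs{\alpha}}\sum_{k\in\alpha}\abs{k}^4\abs{\hat u_k}^2=-\nu D_\alpha.
\end{equation*}
The factor $2$ cancels the $\tfrac12$, so the nonlinear part is
\begin{equation*}
\frac{1}{\abs{\alpha}}\sum_{k\in\alpha}\abs{k}^2\,\Re\bigl(\overline{\hat u_k}\cdot[\cdots]\bigr).
\end{equation*}

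For fixed $k$, the admissible set $\{p\in\Lambda_N: k-p\in\Lambda_N\}$ is partitioned by the $\Oh$-orbits $\beta\in\mathcal O_N$, since $\mathcal O_N$ partitions $\Lambda_N$. This sum is finite, so it can be split as $\sum_\beta\sum_{p\in\beta,\,q=k-p\in\Lambda_N}$. The real part and the dot product with $\overline{\hat u_k}$ are real-linear, so they commute with this finite splitting. Interchanging the finite sums over $k\in\alpha$ and $\beta$ then gives exactly $\sum_\beta M_{\alpha\beta}(u)$ by the definition of $M_{\alpha\beta}$. This proves the first identity.

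For the consequence, multiply the orbit identity by $\abs{\alpha}$ and sum over $\alpha$. On the left, $\sum_\alpha\abs{\alpha}Z_\alpha=Z_N$, as already noted. The dissipation terms satisfy $\sum_\alpha\abs{\alpha}D_\alpha=\sum_{k\in\Lambda_N}\abs{k}^4\abs{\hat u_k}^2$. In the transfer terms, $\abs{\alpha}M_{\alpha\beta}$ removes the $1/\abs{\alpha}$ normalization. Summing over $\beta$ recombines the $p$-sum over all of $\Lambda_N$, and summing over $\alpha$ recombines the $k$-sum over all of $\Lambda_N$, both because the orbits partition $\Lambda_N$. The result is the stated enstrophy balance.

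No step is genuinely difficult; this is bookkeeping. The only point needing care is the sign and normalization convention: the factor $2$ from differentiating the modulus squared must cancel the $\tfrac12$ in $Z_\alpha$, and the $-i$ must stay inside $\Re(\cdot)$ exactly as in the definition of $M_{\alpha\beta}$. I would also note that neither equivariance nor incompressibility is used; the identity holds for any partition of $\Lambda_N$.
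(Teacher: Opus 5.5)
Your proposal is correct and follows essentially the same route as the paper: differentiate $Z_\alpha$ along the Galerkin system, identify the viscous contribution as $-\nu D_\alpha$, group the nonlinear sum by the source orbit $\beta\ni p$, and then sum with weights $\abs{\alpha}$ to recover the full enstrophy balance. You additionally spell out the bookkeeping the paper leaves implicit, namely the factor $2$ from $\tfrac{d}{dt}\abs{\hat u_k}^2$ cancelling the $\tfrac12$ and the interchange of finite sums.
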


\begin{proof}
Differentiate the definition of $Z_\alpha(t)$ along the Galerkin system and substitute the truncated Fourier equation for each $k\in\alpha$. The viscous contribution is exactly $-\nu D_\alpha(t)$. Grouping the nonlinear terms according to the source orbit $\beta$ containing the index $p$ yields the stated formula for $dZ_\alpha/dt$. Summing over all $\alpha$ with weights $\abs{\alpha}$ reconstructs the full lattice sum and hence the displayed identity for $Z_N(t)$.
\end{proof}

Define
\begin{equation*}
A_{\alpha\beta}(u):=\frac{M_{\alpha\beta}(u)-M_{\beta\alpha}(u)}{2},
\qquad
V_{\alpha\beta}(u):=\frac{M_{\alpha\beta}(u)+M_{\beta\alpha}(u)}{2}.
\end{equation*}
In matrix form,
\begin{equation*}
M_N(u)=A_N(u)+V_N(u),
\qquad
A_N(u)^T=-A_N(u),
\qquad
V_N(u)^T=V_N(u).
\end{equation*}
Thus $V_N(u)$ is the symmetric part entering the quadratic form on the Euclidean orbit-coordinate space, while $A_N(u)$ is the antisymmetric redistribution part.

\begin{remark}
The decomposition above is purely algebraic: it does not assert that $M_N(u)$ acts as a closed linear operator on the orbit enstrophy vector $(Z_\alpha)_\alpha$. The entries of $M_N(u)$ still depend on the full Fourier state through the triadic products $\hat u_p\cdot\hat u_q$, so the matrix should be viewed as a state-dependent state-dependent algebraic object for the quadratic nonlinearity.
\end{remark}

\begin{remark}
Any uniform row-sum estimate for the symmetric part $V_N(u)$ also requires control of the column sums of $M_N(u)$, since
\begin{equation*}
\sum_\beta \abs{V_{\alpha\beta}(u)}
\le
\frac12\sum_\beta \abs{M_{\alpha\beta}(u)}+
\frac12\sum_\beta \abs{M_{\beta\alpha}(u)}.
\end{equation*}
Accordingly, the deterministic estimate in Section~\ref{sec:sobolev} is stated only for the raw matrix $M_N(u)$.
\end{remark}

\section{Deterministic Sobolev row-sum bounds}
\label{sec:sobolev}

This section proves a deterministic row-sum bound for the raw orbit-level transfer matrix under Sobolev regularity.

\begin{lemma}[Pointwise Sobolev decay]
\label{lem:sobolev-pointwise}
Let $s\in\R$. If $u\in H^s(\Tthree)$ and $\norm{u}_{H^s}\le M$, then for every $k\neq0$,
\begin{equation*}
\abs{\hat u_k}\le M\abs{k}^{-s}.
\end{equation*}
\end{lemma}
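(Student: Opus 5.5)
The plan is to read off the bound from the Fourier-side definition of the Sobolev norm. I will work with the standard homogeneous normalization on mean-zero fields,
\begin{equation*}
\norm{u}_{H^s}^2=\sum_{k\in\Z^3\setminus\{0\}}\abs{k}^{2s}\abs{\hat u_k}^2 .
\end{equation*}
If one prefers the inhomogeneous weight $(1+\abs{k}^2)^{s}$, the same argument goes through: for $s\ge0$ one has $\abs{k}^{2s}\le(1+\abs{k}^2)^s$ whenever $k\ne0$, so the inhomogeneous norm dominates the homogeneous one. For $s<0$ the comparison $(1+\abs{k}^2)^{s}\ge 2^{s}\abs{k}^{2s}$ holds because $\abs{k}\ge1$, and this only costs an absolute constant. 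I will state which convention is in force, since the estimate as written, with constant exactly $1$, holds verbatim for the homogeneous norm.

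The key step is a one-term lower bound for the series. Fix $k\neq0$. Every term of the sum defining $\norm{u}_{H^s}^2$ is nonnegative, so discarding all terms except the one indexed by $k$ gives
\begin{equation*}
\abs{k}^{2s}\abs{\hat u_k}^2\le\norm{u}_{H^s}^2\le M^2 .
\end{equation*}
Taking square roots yields $\abs{k}^{s}\abs{\hat u_k}\le M$. Since $\abs{k}\ge1>0$, I can multiply through by $\abs{k}^{-s}$, which gives $\abs{\hat u_k}\le M\abs{k}^{-s}$ for every real $s$.

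The only point needing care is the normalization. With Fourier coefficients defined by $u=\sum\hat u_k e^{ik\cdot x}$, Parseval carries a factor $(2\pi)^3$, so the norm may equal $(2\pi)^{3/2}$ times the coefficient sum. Any such factor is at least $1$ and only strengthens the inequality. Here $\abs{\hat u_k}$ denotes the Euclidean norm in $\mathbb C^3$, and the identity $\hat u_{-k}=\overline{\hat u_k}$ plays no role. I therefore expect no real obstacle beyond fixing the normalization convention explicitly.
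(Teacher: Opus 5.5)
Your proof is correct and is essentially the paper's argument: the paper also uses $\norm{u}_{H^s}^2=\sum_{k\neq0}\abs{k}^{2s}\abs{\hat u_k}^2$ and bounds the single $k$-th summand by $M^2$. Your remarks on other normalizations are accurate and not needed under the paper's convention; the one nitpick is that for $s<0$ the inhomogeneous case costs the $s$-dependent factor $2^{-s/2}$, not an absolute constant.
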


\begin{proof}
Since
\begin{equation*}
\norm{u}_{H^s}^2=\sum_{k\neq0}\abs{k}^{2s}\abs{\hat u_k}^2,
\end{equation*}
each summand is bounded by $M^2$. Therefore $\abs{\hat u_k}\le M\abs{k}^{-s}$ for every $k\neq0$.
\end{proof}

\begin{proposition}[Weighted orbit-pair estimate]
\label{prop:weighted-orbit-pair}
Let $s\in\R$, and let $u$ be divergence-free with $\norm{u}_{H^s}\le M$. Then for every pair of orbits $(\alpha,\beta)$,
\begin{equation*}
\abs{M_{\alpha\beta}(u)}
\le
M^3\abs{k_\alpha}^{2-s}
\frac{1}{\abs{\alpha}}
\sum_{(k,p,q)\in\mathcal T_{\alpha\beta}}
\abs{p}^{-s}\abs{q}^{1-s},
\end{equation*}
where $k_\alpha$ denotes any representative of $\alpha$.
\end{proposition}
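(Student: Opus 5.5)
The estimate follows from the definition of $M_{\alpha\beta}(u)$ by the triangle inequality and Lemma~\ref{lem:sobolev-pointwise}, applied term by term. Starting from
\begin{equation*}
\abs{M_{\alpha\beta}(u)}
\le
\frac{1}{\abs{\alpha}}
\sum_{k\in\alpha}
\sum_{\substack{p\in\beta\\ q=k-p\in\Lambda_N}}
\abs{k}^2\,
\abs{\overline{\hat u_k}\cdot P(k)\bigl(q(\hat u_p\cdot\hat u_q)\bigr)},
\end{equation*}
I note that the index set of the double sum is exactly $\mathcal T_{\alpha\beta}$. So it suffices to bound each summand by $M^3\abs{k}^{2-s}\abs{p}^{-s}\abs{q}^{1-s}$. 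I will also use that $\abs{k}=\abs{k_\alpha}$ for every $k\in\alpha$, since $\Oh$ preserves the Euclidean norm.

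For a single triad $(k,p,q)$, I use Cauchy--Schwarz in $\mathbb C^3$ and the fact that $P(k)$ is an orthogonal projection of operator norm $1$. This gives
\begin{equation*}
\abs{\overline{\hat u_k}\cdot P(k)\bigl(q(\hat u_p\cdot\hat u_q)\bigr)}
\le \abs{\hat u_k}\,\abs{q}\,\abs{\hat u_p\cdot\hat u_q}
\le \abs{\hat u_k}\,\abs{q}\,\abs{\hat u_p}\,\abs{\hat u_q}.
\end{equation*}
All of $k,p,q$ are nonzero because they lie in $\Lambda_N$. Applying Lemma~\ref{lem:sobolev-pointwise} to each factor bounds this by
\begin{equation*}
M^3\abs{k}^{-s}\abs{p}^{-s}\abs{q}^{1-s}.
\end{equation*}
Multiplying by $\abs{k}^2=\abs{k_\alpha}^2$ gives the summand bound $M^3\abs{k_\alpha}^{2-s}\abs{p}^{-s}\abs{q}^{1-s}$. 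Since $\abs{k_\alpha}^{2-s}$ is constant on the orbit, it factors out of the sum, and summing over $\mathcal T_{\alpha\beta}$ with the prefactor $1/\abs{\alpha}$ yields exactly the stated inequality.

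There is no real obstacle in this argument. The two points to check are that the norm convention for $H^s$ is the one in Lemma~\ref{lem:sobolev-pointwise}, which uses the homogeneous weight $\abs{k}^{2s}$ and is valid for every real $s$, and that $\norm{P(k)}\le1$ on complex vectors. The divergence-free hypothesis is not needed for this crude bound. It would only matter for sharper bounds that exploit $k\cdot\hat u_k=0$ to replace $q$ by $q-\frac{(q\cdot k)k}{\abs{k}^2}$.
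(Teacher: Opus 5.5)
Your proposal is correct and follows essentially the same route as the paper: you bound each triad term by $\abs{k}^2\abs{q}\abs{\hat u_k}\abs{\hat u_p}\abs{\hat u_q}$ using $\norm{P(k)}_{\mathrm{op}}\le 1$, apply Lemma~\ref{lem:sobolev-pointwise} to $k$, $p$, $q$, and then factor out $\abs{k_\alpha}^{2-s}$ because the norm is constant on the orbit. Your side remark that the divergence-free hypothesis is not used in this crude bound is also accurate.
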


\begin{proof}
For each triad $(k,p,q)\in\mathcal T_{\alpha\beta}$,
\begin{equation*}
\left|
\abs{k}^2
\Re\!\left(
\overline{\hat u_k}\cdot[-iP(k)(q(\hat u_p\cdot\hat u_q))]
\right)
\right|
\le
\abs{k}^2\abs{q}\abs{\hat u_k}\abs{\hat u_p}\abs{\hat u_q},
\end{equation*}
because $\norm{P(k)}_{\mathrm{op}}\le1$. Applying Lemma~\ref{lem:sobolev-pointwise} to $k$, $p$, and $q$ gives
\begin{equation*}
\abs{k}^2\abs{q}\abs{\hat u_k}\abs{\hat u_p}\abs{\hat u_q}
\le
M^3\abs{k}^{2-s}\abs{p}^{-s}\abs{q}^{1-s}.
\end{equation*}
All modes in $\alpha$ have the same Euclidean norm, so summing over $\mathcal T_{\alpha\beta}$ and dividing by $\abs{\alpha}$ yields the claim.
\end{proof}

\begin{lemma}[Discrete convolution bound]
\label{lem:discrete-convolution}
Let $3/2<s<3$. For $k\in\Lambda_N$, define
\begin{equation*}
\Sigma_N(k):=
\sum_{\substack{p\in\Lambda_N\\ k-p\in\Lambda_N}}
\abs{p}^{-s}\abs{k-p}^{1-s}.
\end{equation*}
Then there exists $C_s>0$, independent of $N$ and $k$, such that
\begin{equation*}
\Sigma_N(k)\le C_s\bigl(1+\abs{k}^{4-2s}\bigr).
\end{equation*}
\end{lemma}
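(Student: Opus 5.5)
We bound $\Sigma_N(k)$ by the full lattice sum over $p\in\Z^3\setminus\{0,k\}$; all terms are nonnegative, so dropping the truncation only increases the sum and makes the constant automatically independent of $N$. The exponents are $a:=s$ on $\abs{p}$ and $b:=s-1$ on $\abs{k-p}$, with $a+b=2s-1$. The range $3/2<s<3$ gives $a<3$ and $b<3$, so each singularity is locally summable. It also gives $a+b>2>3-\ldots$; more precisely $a+b=2s-1>2$ but $a+b<3$ only when $s<2$. The decay at infinity is controlled by $\abs{p}^{-(2s-1)}$, which is summable in $\Z^3$ iff $2s-1>3$, i.e.\ $s>2$. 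So I will track the large-$\abs{p}$ region carefully. The target exponent $4-2s=3-(a+b)$ is exactly the continuum scaling of $\int_{\R^3}\abs{x}^{-a}\abs{k-x}^{-b}\,dx$.

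I split $\Z^3$ into three regions relative to $R:=\abs{k}\ge1$.

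\textbf{Region I}, $\abs{p}\le R/2$. Here $\abs{k-p}\ge R/2$, so the contribution is at most $CR^{1-s}\sum_{0<\abs{p}\le R/2}\abs{p}^{-s}$. Since $s<3$, this is $\lesssim R^{1-s}R^{3-s}=R^{4-2s}$, up to an additive constant for small $R$.

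\textbf{Region II}, $\abs{k-p}\le R/2$. Here $\abs{p}\ge R/2$, so the contribution is at most $CR^{-s}\sum_{0<\abs{q}\le R/2}\abs{q}^{1-s}$. Since $s-1<3$, this is $\lesssim R^{-s}R^{4-s}=R^{4-2s}$.

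\textbf{Region III}, both $\abs{p},\abs{k-p}>R/2$. Here $\abs{p}\simeq\abs{k-p}$ when $\abs{p}\ge 2R$, and both are comparable to $R$ on $R/2<\abs{p}<2R$. The middle annulus contributes $\lesssim R^{1-2s}R^3=R^{4-2s}$. The tail $\abs{p}\ge2R$ contributes $\lesssim\sum_{\abs{p}\ge 2R}\abs{p}^{1-2s}$.

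\textbf{Main obstacle.} The tail sum in Region III converges only when $2s-1>3$, i.e.\ $s>2$, and then gives $\lesssim R^{4-2s}$. For $3/2<s\le 2$ the full-lattice sum diverges, so here I must use the truncation $\abs{p}_\infty\le N$. That would produce a factor $\log N$ at $s=2$ and $N^{4-2s}$ for $s<2$, neither of which is uniform in $N$.

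I therefore expect the stated bound to be provable uniformly in $N$ only for $2<s<3$. The failure for $3/2<s\le2$ is genuine and not a proof artifact. For $k=(1,0,0)$ and $s<2$, the sum is $\gtrsim\sum_{\abs{p}\le N}\abs{p}^{1-2s}\sim N^{4-2s}\to\infty$. My plan is to prove the lemma rigorously for $2<s<3$ via the three regions, using dyadic shell counts $\#\{p:\abs{p}\sim 2^j\}\lesssim 2^{3j}$ to evaluate each lattice sum. For $3/2<s\le 2$ I would flag that the claim needs an extra factor $N^{4-2s}$ (or $\log N$ at $s=2$), or else an exponent with more decay on $q$. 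I expect the author's argument to break at this step.
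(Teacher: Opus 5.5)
Your diagnosis is correct, and it locates the actual error in the paper's proof. The paper splits at $|p|=2|k|$. It handles the near field $|p|\le 2|k|$ by cube-averaging comparison with $\int_{|x|\le 3|k|}|x|^{-s}|k-x|^{1-s}\,dx$ followed by rescaling, and that part is fine for $s<3$. In the far field it bounds the summand by $C_s|p|^{1-2s}$ and asserts that $\sum_{p\in\Z^3\setminus\{0\}}|p|^{1-2s}$ converges ``because $1-2s<-2$.'' That is the two-dimensional summability threshold. In $\Z^3$ one needs $2s-1>3$, i.e.\ $s>2$, exactly as you say.

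Your counterexample is valid. For $k=(1,0,0)$, every $p$ with $|p|_\infty\le N-1$ and $p\neq 0,k$ is admissible. For $|p|\ge 2$ one has $|k-p|\le \tfrac32|p|$, and since $1-s<0$ the summand is $\ge c_s|p|^{1-2s}$. Hence $\Sigma_N(k)\gtrsim \sum_{2\le |p|\le N-1}|p|^{1-2s}$, which is $\sim N^{4-2s}$ for $s<2$ and $\sim\log N$ for $s=2$, while the right-hand side is the constant $2C_s$. So the lemma is false on $3/2<s\le 2$, and no argument can rescue it there. The correct replacement is $\Sigma_N(k)\le C_s\bigl(1+|k|^{4-2s}+N^{4-2s}\bigr)\lesssim N^{4-2s}$ for $s<2$, and $\Sigma_N(k)\le C(1+\log N)$ at $s=2$.

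For $2<s<3$ your three-region argument is correct and is essentially the paper's near/far split. The difference is that you treat the two singularities (at $p=0$ and $p=k$) and the intermediate annulus by direct dyadic lattice counts, instead of comparing with the rescaled integral. This is more elementary, and it gives the slightly sharper $\Sigma_N(k)\lesssim |k|^{4-2s}$, whereas the paper's far field only yields a constant.

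The error also propagates to the proof of Theorem~\ref{thm:raw-row-sum}. For $3/2<s<2$ the corrected bound still gives the supremum estimate $N^{6-3s}$, because $|k_\alpha|^{2-s}N^{4-2s}\lesssim N^{6-3s}$. However, the orbit-wise bound no longer follows from this route, and at $s=2$ one picks up a factor $\log N$.
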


\begin{proof}
Split the summation domain into
\begin{equation*}
D_1:=\{p\in\Lambda_N:\abs{p}\le 2\abs{k}\},
\qquad
D_2:=\{p\in\Lambda_N:\abs{p}>2\abs{k}\},
\end{equation*}
so that $\Sigma_N(k)=\Sigma_1(k)+\Sigma_2(k)$.

\textbf{Far field.}
If $p\in D_2$, then
\begin{equation*}
\abs{k-p}\ge \abs{p}-\abs{k}>\frac12\abs{p},
\end{equation*}
so
\begin{equation*}
\abs{p}^{-s}\abs{k-p}^{1-s}\le C_s\abs{p}^{1-2s}.
\end{equation*}
Because $1-2s<-2$ when $s>3/2$, the lattice sum $\sum_{p\in\Z^3\setminus\{0\}}\abs{p}^{1-2s}$ converges, and therefore
\begin{equation*}
\Sigma_2(k)\le C_s.
\end{equation*}

\textbf{Near field.}
The points $p=0$ and $p=k$ are excluded from the triad sum, so only points with $\abs{p}\ge1$ and $\abs{k-p}\ge1$ remain. For each such $p$, let $Q_p:=p+[-\tfrac12,\tfrac12]^3$. If $x\in Q_p$, then $\abs{x}\sim\abs{p}$ and $\abs{k-x}\sim\abs{k-p}$, with constants independent of $k$ and $p$. Hence
\begin{equation*}
\Sigma_1(k)
\le
C_s\int_{\abs{x}\le 3\abs{k}} \abs{x}^{-s}\abs{k-x}^{1-s}\,dx.
\end{equation*}
After the rescaling $x=\abs{k}y$, this becomes
\begin{equation*}
\Sigma_1(k)
\le
C_s\abs{k}^{4-2s}
\int_{\abs{y}\le 3}\abs{y}^{-s}\abs{e-y}^{1-s}\,dy,
\end{equation*}
where $e=k/\abs{k}$. The last integral is finite because $\abs{y}^{-s}$ is locally integrable near the origin for $s<3$, $\abs{e-y}^{1-s}$ is locally integrable near $e$ for $s> -1$, and away from those two points the integrand is bounded on the compact set $\{\abs{y}\le3\}$. Therefore
\begin{equation*}
\Sigma_1(k)\le C_s\abs{k}^{4-2s}.
\end{equation*}
Combining the near-field and far-field bounds gives the result.
\end{proof}

\begin{remark}
The proof of Lemma~\ref{lem:discrete-convolution} uses an integral comparison near the origin, in the spirit of standard singular-integral estimates, see, for example, Stein~\cite{Stein}. The argument established here requires in the present argument $s<3$. Extending the estimate to all $s>3/2$ would require an additional discrete input.
\end{remark}

\begin{theorem}[Deterministic row-sum bound for the raw transfer matrix]
\label{thm:raw-row-sum}
Let $3/2<s<3$, and let $u$ be divergence-free with $\norm{u}_{H^s}\le M$. Then there exists $C_s>0$ such that for every orbit $\alpha\in\mathcal O_N$,
\begin{equation*}
\sum_{\beta\in\mathcal O_N}\abs{M_{\alpha\beta}(u)}
\le
C_s M^3\Bigl(\abs{k_\alpha}^{2-s}+\abs{k_\alpha}^{6-3s}\Bigr).
\end{equation*}
\begin{equation*}
\sup_{\alpha\in\mathcal O_N}\sum_{\beta\in\mathcal O_N}\abs{M_{\alpha\beta}(u)}
\le
\begin{cases}
C_s M^3, & 2<s<3,\\[1mm]
C_s M^3 N^{6-3s}, & 3/2<s\le 2.
\end{cases}
\end{equation*}
\end{theorem}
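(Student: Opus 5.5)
The plan is to combine Proposition~\ref{prop:weighted-orbit-pair} with Lemma~\ref{lem:discrete-convolution}, after first reassembling the orbit-pair sums into a single lattice sum over source modes. Fix $\alpha\in\mathcal O_N$ with representative $k_\alpha$. Summing the bound of Proposition~\ref{prop:weighted-orbit-pair} over $\beta\in\mathcal O_N$ gives
\begin{equation*}
\sum_{\beta}\abs{M_{\alpha\beta}(u)}
\le
M^3\abs{k_\alpha}^{2-s}\frac{1}{\abs{\alpha}}\sum_{\beta}\sum_{(k,p,q)\in\mathcal T_{\alpha\beta}}\abs{p}^{-s}\abs{q}^{1-s}.
\end{equation*}
The orbits $\beta$ partition $\Lambda_N$, so the sets $\mathcal T_{\alpha\beta}$ are disjoint. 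Their union is $\{(k,p,k-p):k\in\alpha,\ p\in\Lambda_N,\ k-p\in\Lambda_N\}$. The double sum therefore equals $\sum_{k\in\alpha}\Sigma_N(k)$, with $\Sigma_N$ as in Lemma~\ref{lem:discrete-convolution}.

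Next I bound each term. Every $k\in\alpha$ has $\abs{k}=\abs{k_\alpha}$, and Lemma~\ref{lem:discrete-convolution} applies for $3/2<s<3$. So each term is at most $C_s(1+\abs{k_\alpha}^{4-2s})$. Averaging over $k\in\alpha$ removes the factor $1/\abs{\alpha}$, which gives
\begin{equation*}
\sum_\beta\abs{M_{\alpha\beta}(u)}\le C_sM^3\abs{k_\alpha}^{2-s}\bigl(1+\abs{k_\alpha}^{4-2s}\bigr)
= C_sM^3\bigl(\abs{k_\alpha}^{2-s}+\abs{k_\alpha}^{6-3s}\bigr).
\end{equation*}
This is the first assertion. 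The equivariance from Lemma~\ref{lem:equivariance} is not even needed, since $\Sigma_N$ depends on $k$ only through the lattice sum, and the bound is uniform in $k$.

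For the supremum I use $1\le\abs{k_\alpha}\le\sqrt3 N$ for $k_\alpha\in\Lambda_N$ and split into two cases.
\begin{itemize}
\item If $2<s<3$, both exponents $2-s$ and $6-3s$ are negative, so both powers are at most $1$. This gives $C_sM^3$.
\item If $3/2<s\le2$, then $6-3s\ge 2-s\ge0$ because $4-2s\ge0$. Since $\abs{k_\alpha}\ge1$, both terms are at most $\abs{k_\alpha}^{6-3s}\le(\sqrt3N)^{6-3s}$. Absorbing $3^{(6-3s)/2}$ into $C_s$ gives $C_sM^3N^{6-3s}$.
\end{itemize}

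The only step needing care is the regrouping over $\beta$. I must make sure each triad is counted once, which holds because the $\beta$ are disjoint, and that the constant from Lemma~\ref{lem:discrete-convolution} is independent of $N$ and $k$, which the lemma asserts. All the genuine analytic difficulty lies in that lemma, which I take as given.
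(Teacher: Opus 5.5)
Your argument follows the paper's route exactly: sum Proposition~\ref{prop:weighted-orbit-pair} over $\beta$, regroup the disjoint sets $\mathcal T_{\alpha\beta}$ into $\sum_{k\in\alpha}\Sigma_N(k)$, apply Lemma~\ref{lem:discrete-convolution}, then split into cases. Using that the lemma's bound depends only on $\abs{k}$, instead of the $\Oh$-invariance of $\Sigma_N$, is a harmless variation. The regrouping and the case analysis are fine. The step you take as given, however, is false for $3/2<s\le 2$, so your proof, like the paper's, only covers $2<s<3$.

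The far-field step of that lemma claims $\sum_{p\in\Z^3\setminus\{0\}}\abs{p}^{1-2s}<\infty$ because $1-2s<-2$. In three dimensions convergence needs $1-2s<-3$, i.e.\ $s>2$. Concretely, take $k=(1,0,0)$, $N\ge 2$ and $3\le\abs{p}\le N/2$. Then $p,k-p\in\Lambda_N$ and $\abs{k-p}\le\frac43\abs{p}$, so
\begin{equation*}
\Sigma_N(k)\ \ge\ \Bigl(\tfrac43\Bigr)^{1-s}\sum_{\substack{p\in\Z^3\\ 3\le \lvert p\rvert\le N/2}}\abs{p}^{1-2s}\ \gtrsim\
\begin{cases}
N^{4-2s}, & 3/2<s<2,\\
\log N, & s=2.
\end{cases}
\end{equation*}
This is not $O(1+\abs{k}^{4-2s})$ uniformly in $N$. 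Consequently $M^3\abs{k_\alpha}^{2-s}\Sigma_N(k_\alpha)$ is of size $M^3N^{4-2s}$ on low orbits and does not yield the per-orbit estimate. At $s=2$ this route even loses a factor $\log N$ in the supremum.

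The theorem itself is true. The loss comes from bounding both $\abs{\hat u_p}$ and $\abs{\hat u_q}$ pointwise, which discards the square-summability encoded in $\norm{u}_{H^s}$. After summing over $\beta$, keep the inner sum intact:
\begin{equation*}
\sum_{\beta}\abs{M_{\alpha\beta}(u)}
\le
\frac{1}{\abs{\alpha}}\sum_{k\in\alpha}\abs{k}^2\abs{\hat u_k}
\sum_{\substack{p\in\Lambda_N\\ k-p\in\Lambda_N}}\abs{\hat u_p}\,\abs{k-p}\,\abs{\hat u_{k-p}}.
\end{equation*}
Apply Cauchy--Schwarz in $p$, using that $p\mapsto k-p$ is injective. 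The inner sum is at most $\bigl(\sum_p\abs{\hat u_p}^2\bigr)^{1/2}\bigl(\sum_q\abs{q}^2\abs{\hat u_q}^2\bigr)^{1/2}\le M^2$, since $\abs{q}\ge1$ and $s\ge1$. Together with $\abs{\hat u_k}\le M\abs{k}^{-s}$ this gives $\sum_\beta\abs{M_{\alpha\beta}(u)}\le M^3\abs{k_\alpha}^{2-s}$ for every $s\ge1$. That implies the first estimate. Both cases of the supremum bound then follow from $\abs{k_\alpha}\le\sqrt3N$ and from $2-s\le 6-3s$ when $s\le 2$.
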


\begin{proof}
Fix $\alpha\in\mathcal O_N$ and choose a representative $k_\alpha\in\alpha$. Summing Proposition~\ref{prop:weighted-orbit-pair} over all source orbits $\beta$ gives
\begin{equation*}
\sum_{\beta\in\mathcal O_N}\abs{M_{\alpha\beta}(u)}
\le
M^3\abs{k_\alpha}^{2-s}
\frac{1}{\abs{\alpha}}
\sum_{k\in\alpha}
\sum_{\substack{p\in\Lambda_N\\ q=k-p\in\Lambda_N}}
\abs{p}^{-s}\abs{q}^{1-s}.
\end{equation*}
For each $k\in\alpha$, the inner sum is exactly $\Sigma_N(k)$. Since $\alpha$ is an $\Oh$-orbit and the kernel $\abs{p}^{-s}\abs{k-p}^{1-s}$ is $\Oh$-invariant, $\Sigma_N(k)$ is constant on $\alpha$. Hence
\begin{equation*}
\sum_{\beta\in\mathcal O_N}\abs{M_{\alpha\beta}(u)}
\le
M^3\abs{k_\alpha}^{2-s}\Sigma_N(k_\alpha).
\end{equation*}
Applying Lemma~\ref{lem:discrete-convolution} yields
\begin{equation*}
\sum_{\beta\in\mathcal O_N}\abs{M_{\alpha\beta}(u)}
\le
C_s M^3\abs{k_\alpha}^{2-s}\bigl(1+\abs{k_\alpha}^{4-2s}\bigr),
\end{equation*}
which is the first claimed estimate.

If $2<s<3$, then both exponents $2-s$ and $6-3s$ are negative, and the bound is uniform because $\abs{k_\alpha}\ge1$. If $3/2<s\le2$, then $6-3s\ge0$, and since $\abs{k_\alpha}\le\sqrt{3}N$,
\begin{equation*}
\abs{k_\alpha}^{2-s}+\abs{k_\alpha}^{6-3s}
\le C_s N^{6-3s}.
\end{equation*}
This proves the theorem.
\end{proof}

\begin{remark}
The preceding theorem bounds only the row sums of the raw matrix $M_N(u)$. To control the symmetric part $V_N(u)$ in the same norm, one would also need column-sum estimates for $M_N(u)$, because
\begin{equation*}
\sum_\beta \abs{V_{\alpha\beta}(u)}
\le
\frac12\sum_\beta \abs{M_{\alpha\beta}(u)}+
\frac12\sum_\beta \abs{M_{\beta\alpha}(u)}.
\end{equation*}
No such two-sided estimate is claimed here.
\end{remark}

\section{Exact finite-$N$ diagnostics}
\label{sec:finiteN}

Several structural quantities of the cubic truncation can be computed exactly at finite $N$.

\begin{proposition}[Exact finite-$N$ diagnostics]
\label{prop:finiteN}
The total number of retained nonzero modes is
\begin{equation*}
\abs{\Lambda_N}=(2N+1)^3-1.
\end{equation*}
The total ordered mode-level triad count satisfies the exact identity
\begin{equation*}
\sum_{k\in\Lambda_N}T(k,N)=(3N^2+3N+1)^3-3(2N+1)^3+2.
\end{equation*}
For $N=1,\dots,8$, the values of $\abs{\mathcal O_N}$, $\abs{\mathcal R_N}$, $\max_k T(k,N)$, and $\sum_{k\in\Lambda_N}T(k,N)$ are listed in Table~\ref{tab:finiteN}.
\end{proposition}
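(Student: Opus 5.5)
The plan is to obtain both closed-form identities by direct summation, using Theorem~\ref{thm:triad-count} for the triad count and a coordinatewise factorization; the tabulated values come from exact enumeration.

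\textbf{Mode count.} $\Lambda_N$ is the integer cube $\{-N,\dots,N\}^3$ with the origin removed. Its cardinality is therefore $(2N+1)^3-1$.

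\textbf{Total triad count.} By Theorem~\ref{thm:triad-count}, for every $k\in\Lambda_N$,
\begin{equation*}
T(k,N)=\prod_{i=1}^3(2N+1-\abs{k_i})-2 .
\end{equation*}
Summing over $k\in\Lambda_N$, the constant $-2$ contributes $-2\abs{\Lambda_N}=-2\bigl((2N+1)^3-1\bigr)$. For the product term, I will sum over the full cube $\{-N,\dots,N\}^3$ and then subtract the $k=0$ term, which equals $(2N+1)^3$. Over the full cube the summand is a product of functions of the separate coordinates, so the sum factorizes as the cube of a one-dimensional sum:
\begin{equation*}
\sum_{a=-N}^{N}(2N+1-\abs{a})=(2N+1)^2-2\cdot\frac{N(N+1)}{2}=3N^2+3N+1 .
\end{equation*}
Putting the pieces together gives
\begin{equation*}
\sum_{k\in\Lambda_N}T(k,N)=(3N^2+3N+1)^3-(2N+1)^3-2(2N+1)^3+2,
\end{equation*}
which simplifies to the stated expression $(3N^2+3N+1)^3-3(2N+1)^3+2$.

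\textbf{Table entries.} For $N=1,\dots,8$, the quantities $\max_k T(k,N)$ and $\sum_k T(k,N)$ follow from Corollary~\ref{cor:max-triad} and the formula just derived. The counts $\abs{\mathcal O_N}$ and $\abs{\mathcal R_N}$ are obtained by exhaustive enumeration:
\begin{itemize}
\item for $\abs{\mathcal R_N}$, list the distinct values $\abs{k}^2$ with $k\in\Lambda_N$;
\item for $\abs{\mathcal O_N}$, list the canonical orbit representatives, namely sorted triples $0\le a\le b\le c\le N$ with $c\ge1$.
\end{itemize}
The second enumeration has the closed form $\binom{N+3}{3}-1$, which can serve as a cross-check on the table.

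\textbf{Main obstacle.} The identities themselves are routine. The only step requiring care is the bookkeeping of the excluded origin: it must be removed once in the product sum and once more through the $-2$ correction, and this double removal is what produces the coefficient $3$ in front of $(2N+1)^3$. The remaining work is verifying the tabulated entries, which I would do by a direct computer check.
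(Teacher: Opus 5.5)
Your proposal is correct and follows the paper's proof essentially step for step. It uses the same count of $\{-N,\dots,N\}^3\setminus\{0\}$, the same coordinatewise factorization of $\sum\prod_i(2N+1-\abs{k_i})$ over the full cube (removing the origin once there and once more through $-2\abs{\Lambda_N}$), and the same appeal to Corollary~\ref{cor:max-triad} and exact enumeration for the table. Your closed form $\binom{N+3}{3}-1$ for $\abs{\mathcal O_N}$ is a small addition the paper does not state; it reproduces the tabulated values $3,9,19,\dots,164$.
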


\begin{proof}
The identity for $\abs{\Lambda_N}$ follows by counting lattice points in $[-N,N]^3$ and removing the origin. The formula for $\max_k T(k,N)$ is given by Corollary~\ref{cor:max-triad}. For the total ordered triad count, Theorem~\ref{thm:triad-count} gives
\begin{equation*}
\sum_{k\in\Lambda_N}T(k,N)
=
\sum_{k\in[-N,N]^3\setminus\{0\}}\prod_{i=1}^3(2N+1-\abs{k_i})-2\abs{\Lambda_N}.
\end{equation*}
Since the product is coordinate-separable,
\begin{equation*}
\sum_{k\in[-N,N]^3}\prod_{i=1}^3(2N+1-\abs{k_i})
=
\left(\sum_{m=-N}^N (2N+1-\abs{m})\right)^3.
\end{equation*}
Moreover,
\begin{equation*}
\sum_{m=-N}^N (2N+1-\abs{m})
=(2N+1)+2\sum_{j=1}^N(2N+1-j)
=3N^2+3N+1.
\end{equation*}
Subtracting first the contribution of the excluded zero mode, namely $(2N+1)^3$, and then the term $2\abs{\Lambda_N}=2((2N+1)^3-1)$ gives the stated formula. The orbit and shell counts in Table~\ref{tab:finiteN} are obtained by exact enumeration of $\Lambda_N$ up to the full octahedral action and by exact enumeration of represented values of $\abs{k}^2$.
\end{proof}

\begin{table}[ht]
\centering
\caption{Exact finite-$N$ combinatorial diagnostics for the cubic truncation.}
\label{tab:finiteN}
\begin{tabular}{rrrrrr}
\toprule
$N$ & $\abs{\Lambda_N}$ & $\abs{\mathcal O_N}$ & $\abs{\mathcal R_N}$ & $\max_k T(k,N)$ & $\sum_{k\in\Lambda_N}T(k,N)$ \\
\midrule
1 & 26 & 3 & 3 & 16 & 264 \\
2 & 124 & 9 & 9 & 98 & 6486 \\
3 & 342 & 19 & 18 & 292 & 49626 \\
4 & 728 & 34 & 31 & 646 & 224796 \\
5 & 1330 & 55 & 44 & 1208 & 749580 \\
6 & 2196 & 83 & 66 & 2026 & 2041794 \\
7 & 3374 & 119 & 87 & 3148 & 4816686 \\
8 & 4912 & 164 & 115 & 4622 & 10203576 \\
\bottomrule
\end{tabular}
\end{table}

\begin{remark}
For the orbit counts in Table~\ref{tab:finiteN}, one may take as canonical representative of each $\Oh$-orbit the unique triple $(a,b,c)\in\Z_{\ge0}^3$ with $a\ge b\ge c\ge0$, $a\le N$, and $(a,b,c)\neq(0,0,0)$. Thus $\abs{\mathcal O_N}$ is exactly the number of such triples, while $\abs{\mathcal R_N}$ is the number of distinct integers of the form $a^2+b^2+c^2$ with the same constraints.
\end{remark}

\end{document}